\newtheorem{theorem}{Theorem}
\newtheorem{lemma}[theorem]{Lemma}
\newtheorem*{1}{Theorem}
\newtheorem*{2}{The weighted matrix-tree Theorem}
\begin{document}

\title{Extremum problems for eigenvalues of discrete Laplace operators}

\author{Ren Guo}

\address{School of Mathematics, University of Minnesota, Minneapolis, MN, 55455, USA}

\email{guoxx170@math.umn.edu}

\subjclass[2000]{52B99, 58C40, 68U05}

\keywords{discrete Laplace operator, spectra, extremum, cyclic polygon.}

\begin{abstract} The discrete Laplace operator on a triangulated polyhedral surface is related to geometric properties of the surface. This paper studies extremum problems for eigenvalues of the discrete Laplace operators. Among all triangles, an equilateral triangle has the maximal first positive eigenvalue. Among all cyclic quadrilateral, a square has the maximal first positive eigenvalue. Among all cyclic $n$-gons, a regular one has the minimal value of the sum of all nontrivial eigenvalues and the minimal value of the product of all nontrivial eigenvalues.
\end{abstract}

\maketitle

\section{Introduction}

\subsection{} A polyhedral surface $S$ is a surface obtained by gluing Euclidean triangles. It is associated with a triangulation $T$. We assume that $T$ is simplicial. Suppose $(\Sigma,T)$ is a polyhedral surface so that
$V,E,F$ are sets of all vertices, edges and triangles in $T.$ We identify vertices of $T$ with indices, edges of $T$ with pairs of indices and triangles of $T$ with triples of indices. This means $V=\{1,2,...|V|\}, E=\{ij\ |\ i,j\in V\}$ and $F=\{\triangle ijk\ |\ i,j,k\in V\}.$  
A vector $(f_1,f_2,...,f_{|V|})^t$ indexed by the set of vertices $V$ defines a piecewise-linear function over $(S,T)$ by linear extension. 

The Dirichlet energy of a function $f$ on $S$ is 
$$E_S(f)=\frac12\int_S|\nabla f|^2 dA.$$

When $f$ is obtained by linear extension of $(f_1,f_2,...,f_{|V|})^t$, the Dirichlet energy of $f$ turns out to be
$$E_{(S,T)}(f)=\frac14\sum_{ijk\in F}[\cot\alpha_{jk}^i(f_j-f_k)^2+\cot\alpha_{ki}^j(f_k-f_i)^2+\cot\alpha_{ij}^k(f_i-f_j)^2]$$
where the sum runs over all triangles of $T$ and for a triangle $ijk\in F$, $\alpha_{jk}^i, \alpha_{ki}^j, \alpha_{ij}^k$ are angles opposite to the edges $jk, ki, ij$ respectively. 

Collecting the terms in the sum above according to edges, we obtain
\begin{align}\label{fm:cot}
E_{(S,T)}(f)=\frac14\sum_{ij\in E}w_{ij}(f_i-f_j)^2
\end{align}
where the sum runs over all edges of $T$ and 
$$
w_{ij}=\left \{
\begin{array}{lll}
\frac12 (\cot\alpha_{ij}^k+\cot\alpha_{ij}^l)           &\ \ \mbox{if $ij$ is shared by two triangles $ijk$ and $ijl$} \\
\frac12 (\cot\alpha_{ij}^k)                             &\ \ \mbox{if $ij$ is contained in one triangles $ijk$}
\end{array}
\right.
$$ 

The Dirichlet energy of a piecewise linear function on a polyhedral surface was introduced and the formula (\ref{fm:cot}) was derived by R. J. Duffin \cite{D}, G. Dziuk \cite{Dz} and U. Pinkall \& K. Polthier \cite{PP} in different context. For application of the Dirichlet energy and formula (\ref{fm:cot}) in the characterization of Delaunay triangulations, see \cite{R, G, BS, CXGL}. For interesting application of the Dirichlet energy and formula (\ref{fm:cot}) in computer graphics, see the survey \cite{BKPAL}.

\subsection{} The discrete Laplace operator $L$ can be introduced by rewriting the Dirichlet energy using notation of matrices
$$E_{(S,T)}(f)=\frac12 (f_1,...,f_{|V|})L(f_1,...,f_{|V|})^t$$
where each entry of the matrix $L$ is given as 
$$
L_{ij}=\left \{
\begin{array}{lll}
\sum_{ik\in E} w_{ik}           &\ \ \mbox{if $i=j$} \\
-w_{ij}                         &\ \ \mbox{if $ij\in E$} \\
0                               &\ \ \mbox{otherwise}  
\end{array}
\right.
$$ 

By definition $L$ is positive semi-definite. Its eigenvalues are denoted by 
$$0=\lambda_0 \leq \lambda_1 \leq \lambda_2 \leq ... \leq \lambda_{|V|-1}.$$

The discrete Laplace operator and its eigenvalues are related to the geometric properties of the polyhedral surface $(S,T)$. For example, it is proved in \cite{CXGL} that among all triangulations, the Delaunay triangulation has the minimal eigenvalues. In \cite{GGLZ}, it is shown that a polyhedral metric on a surface is determined up to scaling by its discrete Laplace operator.   

\subsection{} In smooth case, the spectral geometry is to relate geometric properties of a Riemannian manifold to the spectra of the Laplace operator on the manifold. One of the interesting result is the following one due to G. P\'olya. For reference, for example, see \cite{H}, page 50.  

\begin{1}[P\'olya] 
The equilateral triangle has the least first eigenvalue among all triangles of given area. The square has the least first eigenvalue among all quadrilaterals of given area.
\end{1}

It is conjectured that, for $n\geq 5,$ the regular $n$-gon has the least first eigenvalue among all $n$-gons of given area.

\subsection{} In this paper, similar results as P\'olya's theorem are obtained for the discrete Laplace operator.

\begin{theorem}\label{thm:3} Among all triangles, an equilateral triangle has the maximal $\lambda_1$, the minimal $\lambda_2$ and the minimal $\lambda_1+\lambda_2$.
\end{theorem}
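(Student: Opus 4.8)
The plan is to reduce the problem to the two elementary symmetric functions of the nonzero eigenvalues. Label the three angles of the triangle $\theta_1,\theta_2,\theta_3$, with $\theta_i$ opposite to the edge joining the other two vertices, so that by (\ref{fm:cot}) and the definition of $L$ the three edge weights are $a=\tfrac12\cot\theta_1$, $b=\tfrac12\cot\theta_2$, $c=\tfrac12\cot\theta_3$ and
$$L=\begin{pmatrix} b+c & -c & -b\\ -c & c+a & -a\\ -b & -a & a+b\end{pmatrix}.$$
Since $(1,1,1)^t$ spans $\ker L$, the other two eigenvalues satisfy $\lambda_1+\lambda_2=\operatorname{tr}L=2(a+b+c)=\cot\theta_1+\cot\theta_2+\cot\theta_3$, while $\lambda_1\lambda_2$ equals the sum of the three $2\times2$ principal minors of $L$, which a one-line computation identifies as $3(ab+bc+ca)=\tfrac34(\cot\theta_1\cot\theta_2+\cot\theta_2\cot\theta_3+\cot\theta_3\cot\theta_1)$.

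Next I would invoke the standard identity $\cot\theta_1\cot\theta_2+\cot\theta_2\cot\theta_3+\cot\theta_3\cot\theta_1=1$, valid whenever $\theta_1+\theta_2+\theta_3=\pi$ (it follows from $\cot(\theta_1+\theta_2)=-\cot\theta_3$). Hence $\lambda_1\lambda_2=\tfrac34$ for \emph{every} triangle, and the whole problem is controlled by the single parameter $S:=\lambda_1+\lambda_2=\cot\theta_1+\cot\theta_2+\cot\theta_3$. This $S$ is positive (for instance $S=(\ell_1^2+\ell_2^2+\ell_3^2)/(4\,\mathrm{Area})$ in terms of the side lengths $\ell_i$), and applying $(x+y+z)^2\ge 3(xy+yz+zx)$ to $x=\cot\theta_1$, $y=\cot\theta_2$, $z=\cot\theta_3$ gives $S^2\ge 3$, i.e.\ $S\ge\sqrt3$, with equality exactly when $\cot\theta_1=\cot\theta_2=\cot\theta_3$, that is, for the equilateral triangle.

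Finally, from $\lambda_1+\lambda_2=S$ and $\lambda_1\lambda_2=\tfrac34$ one obtains $\lambda_1=\tfrac12(S-\sqrt{S^2-3})$ and $\lambda_2=\tfrac12(S+\sqrt{S^2-3})$. On the range $S\ge\sqrt3$ the map $S\mapsto\lambda_1$ is strictly decreasing, $S\mapsto\lambda_2$ is strictly increasing, and $S\mapsto\lambda_1+\lambda_2=S$ is strictly increasing; therefore $\lambda_1$ is maximal, $\lambda_2$ is minimal, and $\lambda_1+\lambda_2$ is minimal precisely at $S=\sqrt3$, i.e.\ at the equilateral triangle, where $\lambda_1=\lambda_2=\sqrt3/2$. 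I do not expect a real obstacle here: the only delicate points are recalling the cotangent identity (which collapses the product to a constant) and checking that $S>0$, so that $\sqrt{S^2-3}$ is subtracted rather than added in the formula for $\lambda_1$; everything else is the monotonicity of two explicit one-variable functions.
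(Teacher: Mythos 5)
Your proof is correct, and for the heart of the matter it takes a genuinely different route from the paper. The reduction is the same in both: since $\lambda_0=0$, the trace gives $\lambda_1+\lambda_2$ as a multiple of $\cot\theta_1+\cot\theta_2+\cot\theta_3$, and the identity $\cot\theta_1\cot\theta_2+\cot\theta_2\cot\theta_3+\cot\theta_3\cot\theta_1=1$ (formula (\ref{fm:3})) makes $\lambda_1\lambda_2$ a constant, after which all three extremal statements follow by the same explicit-root/monotonicity argument the paper uses in its last subsection. The difference is how the key inequality $\cot\theta_1+\cot\theta_2+\cot\theta_3\geq\sqrt3$ (with equality only for the equilateral triangle) is obtained: the paper runs a Lagrange-multiplier computation to locate the unique critical point of $a_1+a_2+a_3$ on $\Omega_3$, checks positivity via $a_i+a_j>0$, and analyzes blow-up along paths to the boundary; you instead apply $(x+y+z)^2\geq 3(xy+yz+zx)$ together with $xy+yz+zx=1$ and the positivity of the sum (your area formula, or equivalently the paper's pairing trick), which yields the inequality and its equality case in one line, calculus-free. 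Your argument is shorter and more elementary, but it exploits the special fact that the constraint is exactly ``second symmetric function $=1$''; the paper's critical-point-plus-boundary scheme is the template it then reuses for cyclic quadrilaterals and $n$-gons, where no such one-line algebraic identity is available. One harmless discrepancy: you keep the factor $\tfrac12$ in the weights, so your eigenvalues are half the paper's (e.g.\ $\lambda_1\lambda_2=\tfrac34$ rather than $3$, and $\sqrt3/2$ rather than $\sqrt3$ at the equilateral triangle); this rescaling does not affect any of the extremal claims.
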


A cyclic polygon is a polygon whose vertices are on a common circle. By adding diagonals, a cyclic polygon is decomposed into a union of triangles. For each inner edge of any triangulation of a a cyclic polygon, the weight $w_{ij}$ is zero. Therefore the discrete Laplace operator is independent of the choice of a triangulation of a cyclic polygon. 

\begin{theorem}\label{thm:4}
Among all cyclic quadrilaterals, a square has the maximal $\lambda_1$, the minimal $\lambda_1+\lambda_2+\lambda_3$, the minimal $\lambda_1\lambda_2+\lambda_2\lambda_3+\lambda_3\lambda_1$ and the minimal $\lambda_1\lambda_2\lambda_3.$
\end{theorem}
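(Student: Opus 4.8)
The plan is to reduce the statement to the weighted Laplacian of the $4$-cycle $C_{4}$. Let the four vertices, in cyclic order, cut the circumscribed circle into arcs of measures $2\theta_{1},2\theta_{2},2\theta_{3},2\theta_{4}$, so $\theta_{i}\in(0,\pi)$ and $\theta_{1}+\theta_{2}+\theta_{3}+\theta_{4}=\pi$; by the inscribed-angle theorem the angle of a diagonal triangulation opposite the $i$th side is $\theta_{i}$ in whichever triangle contains that side, so the only surviving weights are the boundary weights $w_{i}=\tfrac12\cot\theta_{i}$ (possibly negative) and $L$ is the Laplacian of $C_{4}$ with edge weights $w_{1},w_{2},w_{3},w_{4}$. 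I would then record the three symmetric functions of $\lambda_{1},\lambda_{2},\lambda_{3}$. With $\Sigma=\sum_{i}\cot\theta_{i}$ we have $\lambda_{1}+\lambda_{2}+\lambda_{3}=\operatorname{tr}L=\Sigma$; the weighted matrix-tree theorem gives $\lambda_{1}\lambda_{2}\lambda_{3}=4\sum_{i}\prod_{j\ne i}w_{j}=\tfrac12 e_{3}(\cot\theta_{1},\dots,\cot\theta_{4})$, and since $\theta_{1}+\theta_{2}=\pi-(\theta_{3}+\theta_{4})$ forces the identity $e_{3}(\cot\theta_{1},\dots,\cot\theta_{4})=e_{1}(\cot\theta_{1},\dots,\cot\theta_{4})$, this collapses to $\lambda_{1}\lambda_{2}\lambda_{3}=\tfrac12\Sigma$; and summing the six $2\times2$ principal minors of $L$ gives $\lambda_{1}\lambda_{2}+\lambda_{2}\lambda_{3}+\lambda_{3}\lambda_{1}=\tfrac34\gamma\delta+\cot\theta_{1}\cot\theta_{3}+\cot\theta_{2}\cot\theta_{4}$, where $\gamma=\cot\theta_{1}+\cot\theta_{3}=\sin(\theta_{1}+\theta_{3})/(\sin\theta_{1}\sin\theta_{3})>0$ and $\delta=\cot\theta_{2}+\cot\theta_{4}>0$.

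Next I would prove the single inequality $\Sigma\ge 4$, equality only at the square. For fixed $\theta_{1}+\theta_{3}$ the function $\gamma=\sin(\theta_{1}+\theta_{3})/(\sin\theta_{1}\sin\theta_{3})$ is minimized at $\theta_{1}=\theta_{3}$, so $\gamma\ge 2\cot\tfrac{\theta_{1}+\theta_{3}}{2}$, and likewise $\delta\ge 2\cot\tfrac{\theta_{2}+\theta_{4}}{2}$; writing $\beta=\tfrac{\theta_{1}+\theta_{3}}{2}$ so that $\tfrac{\theta_{2}+\theta_{4}}{2}=\tfrac{\pi}{2}-\beta$, we get $\Sigma\ge 2(\cot\beta+\tan\beta)=4/\sin 2\beta\ge 4$, with equality iff $\theta_{1}=\theta_{3}$, $\theta_{2}=\theta_{4}$ and $\beta=\pi/4$, i.e.\ $\theta_{1}=\theta_{2}=\theta_{3}=\theta_{4}=\pi/4$. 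In particular $\lambda_{1}\lambda_{2}\lambda_{3}=\tfrac12\Sigma>0$, so $L$ has a simple zero eigenvalue and $0=\lambda_{0}<\lambda_{1}\le\lambda_{2}\le\lambda_{3}$; and since $\lambda_{1}+\lambda_{2}+\lambda_{3}=\Sigma$ and $\lambda_{1}\lambda_{2}\lambda_{3}=\tfrac12\Sigma$, the bounds for the sum ($\ge 4$) and the product ($\ge 2$) of the nontrivial eigenvalues, attained only at the square, follow immediately.

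For the maximality of $\lambda_{1}$ I would use the test vector $f=(1,-1,1,-1)$, which is orthogonal to $\mathbf 1$ and satisfies $f^{t}Lf=2\Sigma$, $f^{t}f=4$, whence $\lambda_{3}\ge\tfrac12\Sigma$. Then $\lambda_{1}\lambda_{2}=\lambda_{1}\lambda_{2}\lambda_{3}/\lambda_{3}=(\tfrac12\Sigma)/\lambda_{3}\le 1$, so $\lambda_{1}^{2}\le\lambda_{1}\lambda_{2}\le 1$ and $\lambda_{1}\le 1$; equality forces $\lambda_{1}=\lambda_{2}=1$, $\lambda_{3}=\tfrac12\Sigma$, hence $2+\tfrac12\Sigma=\Sigma$, i.e.\ $\Sigma=4$, i.e.\ the square. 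For the minimum of $e_{2}:=\lambda_{1}\lambda_{2}+\lambda_{2}\lambda_{3}+\lambda_{3}\lambda_{1}$, put $m=\cot(\theta_{1}+\theta_{3})$ and use $\cot\theta_{1}\cot\theta_{3}=1+m\gamma$, $\cot\theta_{2}\cot\theta_{4}=1-m\delta$ to rewrite $e_{2}=\tfrac34\gamma\delta+2+m(\gamma-\delta)$. For $\theta_{1}+\theta_{3}$ (hence $m$) fixed we have $\gamma\ge 2\tau$ and $\delta\ge 2/\tau$ with $\tau=\cot\tfrac{\theta_{1}+\theta_{3}}{2}>0$, and, using $m=(\tau^{2}-1)/2\tau$, one checks $\partial e_{2}/\partial\gamma=\tfrac34\delta+m>0$ and $\partial e_{2}/\partial\delta=\tfrac34\gamma-m>0$ throughout; so $e_{2}$ is minimized when $\theta_{1}=\theta_{3}$, $\theta_{2}=\theta_{4}$, i.e.\ on rectangles, and on a rectangle $\theta_{1}=\theta_{3}=\beta$, $\theta_{2}=\theta_{4}=\pi/2-\beta$ a direct computation gives $e_{2}=5+4\cot^{2}2\beta\ge 5$, equality iff $\beta=\pi/4$ — again the square.

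The main obstacle is this last step. Unlike the sum and the product, $e_{2}$ is not a function of $\Sigma$ alone, so $\Sigma\ge 4$ gives no information; and the naive estimate $e_{2}\ge\tfrac34\gamma\delta+2\ge 5$ (from $\gamma\delta\ge 4$) fails because $m(\gamma-\delta)$ need not be nonnegative. What rescues the argument is the reparametrization by $(\theta_{1}+\theta_{3},\gamma,\delta)$ together with the monotonicity of $e_{2}$ in $\gamma$ and in $\delta$, which reduces the problem to the one-parameter family of rectangles; the positivity $\gamma,\delta>0$ is needed to make these ranges and the matrix-tree simplification legitimate. It is also worth noting that the two inequalities $\lambda_{3}\ge\tfrac12\Sigma$ and $e_{2}\ge 5$ are sharp on the whole family of rectangles, not just on the square, so the square gets singled out only after feeding in $\Sigma=4$.
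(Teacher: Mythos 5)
Your proposal is correct, but it takes a genuinely different route from the paper. You work with the weighted cycle Laplacian directly (your weights $\tfrac12\cot\theta_i$ make your $L$ half of the paper's $L_4$, so your constants $4,2,5,1$ correspond to the paper's $8,16,20,2$; the extremal statements are unaffected). For the sum and the product you use the trace and the weighted matrix-tree theorem together with the identity $e_3=e_1$ (the paper's equation (\ref{fm:4})), and you prove $\Sigma\geq 4$ by the symmetrization inequality $\cot\theta_1+\cot\theta_3\geq 2\cot\tfrac{\theta_1+\theta_3}{2}$, whereas the paper finds the critical point of $a_1+a_2+a_3+a_4$ by Lagrange multipliers and then rules out boundary minima by analyzing paths to the boundary of $\Omega_4$; your argument avoids that compactness/boundary discussion entirely. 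For $\lambda_1\lambda_2+\lambda_2\lambda_3+\lambda_3\lambda_1$ the paper again runs a Lagrange-multiplier computation (using the auxiliary inequality (\ref{fm:ine})) plus boundary analysis, while you reparametrize by $(\gamma,\delta,m)$, check that $\tfrac34\delta+m$ and $\tfrac34\gamma-m$ are positive on the admissible region (indeed $\tfrac34\cdot\tfrac2\tau+m=\tfrac{\tau^2+2}{2\tau}>0$ and $\tfrac34\cdot 2\tau-m=\tfrac{2\tau^2+1}{2\tau}>0$), reduce to rectangles, and finish with $e_2=5+4\cot^2 2\beta$; this checks out. For $\lambda_1$, the paper locates roots of $Q(x)=P_4(x)/x$ via the signs of $Q(0),Q(2),Q'(0),Q'(2)$, while you use the test vector $(1,-1,1,-1)$ to get $\lambda_3\geq\tfrac12\Sigma$ and then $\lambda_1^2\leq\lambda_1\lambda_2=(\tfrac12\Sigma)/\lambda_3\leq 1$, with the equality case forcing $\Sigma=4$. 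Two small points to record explicitly: the square actually attains your bound (its spectrum in your normalization is $0,1,1,2$), which is needed to say the square is the maximizer and not merely the only candidate; and the step ``$\lambda_1\lambda_2\lambda_3=4\sum_i\prod_{j\neq i}w_j$'' is best justified via the coefficient of $x$ in the characteristic polynomial (sum of principal $3\times 3$ minors), which equals $\lambda_1\lambda_2\lambda_3$ whether or not the kernel is simple, since the paper's phrasing of the matrix-tree theorem refers to nonzero eigenvalues. What your approach buys is a calculus-free, self-contained treatment of the quadrilateral case; what the paper's buys is a scheme (critical point plus boundary blow-up plus induction) that it reuses for cyclic $n$-gons in Section 4.
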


\begin{theorem}\label{thm:n} For $n\geq 5,$ among all cyclic $n$-gons, a regular $n$-gon has the minimal $\sum_{i=1}^{n-1}\lambda_i$ and the minimal $\prod_{i=1}^{n-1}\lambda_i$.
\end{theorem}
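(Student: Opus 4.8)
The plan is to pass from the polygon to the arc decomposition of its circumscribed circle. Write the central angles of the $n$ arcs cut off by the edges as $2\theta_1,\dots ,2\theta_n$, so that $\theta_i>0$ and $\sum_{i=1}^n\theta_i=\pi$. By the inscribed angle theorem the angle opposite an edge $e_i$ in the triangle of a triangulation containing $e_i$ equals $\theta_i$ regardless of which triangle it is, so $w_{e_i}=\tfrac12\cot\theta_i$; this also re-derives the triangulation independence noted above, since a diagonal receives weight $\tfrac12(\cot\alpha+\cot(\pi-\alpha))=0$. Thus $L$ is the weighted Laplacian of the $n$-cycle $C_n$ with edge weights $w_i=\tfrac12\cot\theta_i$, and since the $\theta_i$ are scale invariant it suffices to minimize over the open simplex $\Delta=\{\theta_i>0,\ \sum_i\theta_i=\pi\}$.

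I would then rewrite the two quantities as symmetric functions of $\theta$. Because each edge weight contributes to two diagonal entries of $L$,
\[
\sum_{i=1}^{n-1}\lambda_i=\operatorname{tr}L=2\sum_{i=1}^n w_i=\sum_{i=1}^n\cot\theta_i .
\]
Since the spanning trees of $C_n$ are exactly the complements of single edges, the weighted matrix-tree Theorem gives
\[
\prod_{i=1}^{n-1}\lambda_i=n\sum_{j=1}^n\ \prod_{i\ne j}w_i=n\Bigl(\tfrac12\Bigr)^{n-1}\sum_{j=1}^n\ \prod_{i\ne j}\cot\theta_i .
\]
Moreover a nondegenerate convex cyclic polygon has positive area, so its Dirichlet energy vanishes only on constants, whence $\lambda_1>0$ and the last sum is strictly positive on all of $\Delta$; this positivity is used crucially below.

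The heart of the argument is a two-variable reduction. Fix all of the $\theta_i$ except two of them, call their sum $s$, and write the two as $\tfrac s2\pm u$. Using $\sin(\tfrac s2+u)\sin(\tfrac s2-u)=\sin^2\tfrac s2-\sin^2u$ and $\cos(\tfrac s2+u)\cos(\tfrac s2-u)=\cos^2\tfrac s2-\sin^2u$, and setting $t=\sin^2u$, $a=\sin^2\tfrac s2$ (so $a-t=\sin(\tfrac s2+u)\sin(\tfrac s2-u)>0$), one finds $\sum_i\cot\theta_i=\dfrac{\sin s}{a-t}+\mathrm{const}$ and $\sum_j\prod_{i\ne j}\cot\theta_i=\dfrac{q\,(\cos^2\tfrac s2-t)+p\sin s}{a-t}$, where $p=\prod_k\cot\theta_k$ and $q=\sum_k\prod_{l\ne k}\cot\theta_l$ are formed from the remaining indices. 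Each of these has the form $\dfrac{\alpha t+\beta}{a-t}$, which is monotone in $t$ on $[0,a)$ with derivative of constant sign equal to the sign of the numerator at $t=a$. On $\Delta$ we have $s<\pi$, so for the first expression the numerator is the positive constant $\sin s$, while for the second the numerator equals $(a-t)$ times a positive multiple of $\prod_{i=1}^{n-1}\lambda_i$, hence is positive on $[0,a)$ and therefore $\ge 0$ at $t=a$ by continuity. In both cases the expression is nondecreasing in $t$, so it is minimized at $t=0$, i.e.\ when the two chosen angles are equal.

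Finally I would invoke the standard smoothing argument: both objectives are continuous and symmetric on $\Delta$ and, by the previous step, are not increased when any two coordinates are replaced by their average, so iterating such averages over the pairs $(\theta_1,\theta_2),(\theta_2,\theta_3),\dots$ in repeated cyclic sweeps produces a sequence in $\Delta$ converging to the barycenter $\theta^{*}=(\tfrac\pi n,\dots,\tfrac\pi n)$ along which neither objective increases; by continuity the regular $n$-gon attains the minimum of both $\sum_{i=1}^{n-1}\lambda_i$ and $\prod_{i=1}^{n-1}\lambda_i$. I expect the product to be the main obstacle: one must carry out the $t$-substitution carefully, and the point that actually makes it work is recognizing that the sign of the derivative in the two-variable step is forced by the positivity $\prod_{i=1}^{n-1}\lambda_i>0$, rather than by a case analysis on the possibly negative weights $\tfrac12\cot\theta_i$. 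A secondary matter is making the smoothing limit rigorous, for instance via the contraction property of cyclic averaging on the hyperplane $\sum_i\theta_i=\pi$; and if one wants uniqueness of the minimizer one should upgrade the inequalities to strict ones, e.g.\ by a Lagrange-multiplier computation showing $\theta^{*}$ is the only critical point in $\Delta$.
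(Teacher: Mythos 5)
Your proposal is correct, and it takes a genuinely different route from the paper. The paper also reduces to the two symmetric functions $\sum_i\cot\theta_i$ (the trace) and $\sum_j\prod_{i\ne j}\cot\theta_i$ (the principal cofactor, which it evaluates by induction on the tridiagonal matrix $N_n$ rather than by listing the spanning trees of the cycle), but it minimizes them by a Lagrange-multiplier argument showing $(\frac\pi n,\dots,\frac\pi n)$ is the unique critical point, combined with positivity of the function and blow-up at the boundary of the simplex; for the product all three ingredients are proved by induction on $n$, using $\cot(\theta_i+\theta_j)$ to merge two arcs and reduce to a cyclic $(n-1)$-gon. You replace this by: (i) the inscribed-angle identification $w_{e_i}=\frac12\cot\theta_i$ (which also explains the triangulation independence), (ii) positivity of the tree sum via $\lambda_1>0$, obtained from the Dirichlet-energy identity and connectedness, and (iii) a two-variable symmetrization in which, with $\theta_1+\theta_2=s$ fixed and $t=\sin^2u$, both objectives take the form $(\alpha t+\beta)/(a-t)$, and the sign of the $t$-derivative, namely the sign of $q\cos s+p\sin s$, is forced to be nonnegative by that positivity; iterated pairwise averaging then transports the inequality to the barycenter. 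Your computations check out, and this buys a proof with no boundary/coercivity analysis and no induction on $n$; in exchange you must justify, as you yourself flag, the convergence of cyclic pairwise averaging to the barycenter (standard, e.g.\ by a max--min or variance contraction over each full sweep), and your product step gives only non-strict monotonicity, so you do not recover the uniqueness of the minimizer that the paper's unique-critical-point analysis implicitly provides --- though the theorem as stated does not require it.
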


\subsection{Plan of the paper} Theorem \ref{thm:3}, Theorem \ref{thm:4} and Theorem \ref{thm:n} are proved in section 2, section 3 and section 4 respectively.

\section{Triangles}

\subsection{} In this section we prove Theorem \ref{thm:3}. Let $\theta_1,\theta_2,\theta_3$ be the three angles of a triangle. Let $a_i:=\cot\theta_i$ for $i=1,2,3.$ The condition $\theta_1+\theta_2+\theta_3=\pi$ implies that
\begin{align}\label{fm:3}
a_1a_2+a_2a_3+a_3a_1=1.
\end{align}

The discrete Laplace operator is
$$
L_3=\left(
\begin{array}{cccc}
a_1+a_3 & -a_1    &-a_3 \\
-a_1    & a_1+a_2 &-a_2 \\
-a_3    & -a_2    &  a_2+a_3 
\end{array}
\right).
$$

The characteristic polynomial of $L_3$ is 
\begin{align*}
P_3(x)=\det(L_3-x I_3)&=-x^3+2(a_1+a_2+a_3)x^2-3(a_1a_2+a_2a_3+a_3a_1)x\\
                      &=-x^3+2(a_1+a_2+a_3)x^2-3x
\end{align*}
by the equation (\ref{fm:3}).

The eigenvalues of $L_3$ are denoted by $0=\lambda_0\leq \lambda_1 \leq \lambda_2.$

\subsection{} Therefore $\lambda_1+\lambda_2=2(a_1+a_2+a_3).$ We claim that $a_1+a_2+a_3\geq \sqrt{3}$ and the equality holds if and only if $\theta_1=\theta_2=\theta_3=\frac\pi 3.$

Consider $f:=a_1(\theta_1)+a_2(\theta_2)+a_3(\theta_3)$ as a function defined on the domain 
$$\Omega_3:=\{(\theta_1,\theta_2,\theta_3)\ | \  \theta_1+\theta_2+\theta_3=\pi, \theta_i>0, i=1,2,3 \}.$$

To find the absolute minimum of $f$, we apply the method of Lagrange multiplier. Let 
$$F=a_1(\theta_1)+a_2(\theta_2)+a_3(\theta_3)+y(\theta_1+\theta_2+\theta_3-\pi).$$
Since $$\frac{d a_i(\theta_i)}{d \theta_i}=-\frac{1}{\sin^2\theta_i}=-(1+a_i^2),$$ we have
\begin{align*}
0=\frac{\partial F}{\partial \theta_1}&=-(1+a_1^2)+y,\\
0=\frac{\partial F}{\partial \theta_2}&=-(1+a_2^2)+y,\\
0=\frac{\partial F}{\partial \theta_3}&=-(1+a_3^2)+y,\\
0=\frac{\partial F}{\partial y}&=\theta_1+\theta_2+\theta_3-\pi.
\end{align*}
Therefore the function $f$ has the unique critical point $(\theta_1,\theta_2,\theta_3)=(\frac\pi 3, \frac\pi 3, \frac\pi 3).$

Next, we investigate the behavior of the function $f$ when the variable $(\theta_1,\theta_2,\theta_3)$ approaches the boundary of the domain $\Omega_3$. Let $(\theta_1(t),\theta_2(t), \theta_3(t)), t\in [0,\infty),$ be a path in the domain $\Omega_3$. Let $a_i(t)=\cot \theta_1(t)$ for $i=1,2,3.$ 

Without loss of generality, we assume
$$\lim_{t\to \infty} (\theta_1(t),\theta_2(t), \theta_3(t))=(0, s_2, s_3)$$ where $s_2\geq 0, s_3\geq 0$ and $s_2+s_3=\pi.$ Then $\lim_{t\to \infty}a_1(t)=\infty$ and $a_2(t)+a_3(t)>0$ for $t\in[0,\infty)$. Hence $$\lim_{t\to \infty}(a_1(t)+a_2(t)+a_3(t))=\infty.$$

If $\theta_i+\theta_j<\pi,$ then $\cot\theta_i>\cot(\pi-\theta_j)=-\cot\theta_j$. Therefore $a_i+a_j>0.$ 
Hence $2f=(a_1+a_2)+(a_2+a_3)+(a_3+a_1)>0.$ Thus $f$ has the absolute minimum. But the absolute minimum can not be achieved at a point in the boundary of $\Omega_3$. It must be achieved at the unique critical point $(\frac\pi 3, \frac\pi 3, \frac\pi 3).$

This shows that $a_1+a_2+a_3\geq \sqrt{3}$ and the equality holds if and only if $\theta_1=\theta_2=\theta_3=\frac\pi 3.$

\subsection{} Since
$$\lambda_2=a_1+a_2+a_3+\sqrt{(a_1+a_2+a_3)^2-3},$$ we have $\lambda_2\geq \sqrt{3}$ and the equality holds if and only if $\theta_1=\theta_2=\theta_3=\frac\pi3$.

Since $\lambda_1\lambda_2=3,$ we have $\lambda_1\leq \sqrt{3}$ and the equality holds if and only if $\theta_1=\theta_2=\theta_3=\frac\pi3$.

\section{quadrilaterals}

\subsection{} The vertices of a cyclic quadrilateral decompose its circumcircle into four arcs. We assume the radius of the circumcircle is 1 and the lengths of the four arcs are $2\theta_1,2\theta_2,2\theta_3,2\theta_4.$ Let $a_i:=\cot\theta_i$ for $i=1,...,4.$ The condition $\theta_1+\theta_2+\theta_3+\theta_4=\pi$ implies
\begin{align}\label{fm:4}
a_1a_2a_3+a_1a_2a_4+a_1a_3a_4+a_2a_3a_4=a_1+a_2+a_3+a_4.
\end{align}

There are two ways to decompose a cyclic quadrilateral in to a union of two triangles. The two ways produce the same discrete Laplace operator:
 
$$
L_4=\left(
\begin{array}{cccc}
a_1+a_4 & -a_1    &0        & -a_4 \\
-a_1    & a_1+a_2 &-a_2     & 0    \\
0       &-a_2     &a_2+a_3  & -a_3   \\
-a_4    & 0       &-a_3     &  a_3+a_4 
\end{array}
\right).
$$

The characteristic polynomial of $L_4$ is
\begin{align*}
P_4(x)&=x^4-2(a_1+a_2+a_3+a_4)x^3\\
&\hspace{70pt}+(3(a_1a_2+a_2a_3+a_3a_4+a_4a_1)+4(a_1a_3+a_2a_4))x^2\\
&\hspace{140pt}-4(a_1a_2a_3+a_1a_2a_4+a_1a_3a_4+a_2a_3a_4)x\\
&=x^4-2(a_1+a_2+a_3+a_4)x^3\\
&\hspace{70pt}+(3(a_1a_2+a_2a_3+a_3a_4+a_4a_1)+4(a_1a_3+a_2a_4))x^2\\
& \hspace{206pt} -4(a_1+a_2+a_3+a_4)x,
\end{align*}
by the equation (\ref{fm:4}).

\subsection{}  By the similar argument in the case of triangles, we can show that $a_1+a_2+a_3+a_4$ has the unique critical point at 
$(\frac\pi4,\frac\pi4,\frac\pi4,\frac\pi4).$ And we have $2(a_1+a_2+a_3+a_4)=(a_1+a_2)+(a_1+a_3)+(a_3+a_4)+(a_4+a_1)>0.$

Next, we investigate the behavior of  the function $a_1+a_2+a_3+a_4$ when the variable $(\theta_1,\theta_2,\theta_3, \theta_4)$ approaches the boundary of the domain $$\Omega_4=\{(\theta_1,\theta_2,\theta_3, \theta_4)\ | \  \theta_1+\theta_2+\theta_3+\theta_4=\pi, \theta_i>0, i=1,2,3, 4 \}.$$ Let $(\theta_1(t),\theta_2(t), \theta_3(t), \theta_4(t))$, $t\in [0,\infty),$ be a path in the domain $\Omega_4$. Let $a_i(t)=\cot \theta_1(t)$ for $i=1,2,3,4.$ 

Without loss of generality, we assume
$$\lim_{t\to \infty} (\theta_1(t),\theta_2(t), \theta_3(t), \theta_4(t))=(0, s_2, s_3, s_4)$$ where $s_i\geq 0$ for $i=2,3,4$ and $s_2+s_3+s_4=\pi.$ And we can assume that $s_2<\frac\pi2$ and $s_3<\frac\pi2$.
Then $\lim_{t\to \infty} a_1(t)=\infty$, $a_2(t)>0$ when $t$ is sufficiently large and $a_3(t)+a_4(t)>0$ for any $t\in [0,\infty)$. Hence 
$$\lim_{t\to \infty} (a_1(t)+a_2(t)+a_3(t)+a_4(t))=\infty$$.

Therefore $a_1+a_2+a_3+a_4$ achieves its absolute minimum at the unique critical point $(\frac\pi4,\frac\pi4,\frac\pi4,\frac\pi4).$ Hence $a_1+a_2+a_3+a_4\geq 4$ and the equality holds if and only if $\theta_1=\theta_2=\theta_3=\theta_4=\frac\pi4$.

Therefore $\lambda_1+\lambda_2+\lambda_3\geq 8$, $\lambda_1\lambda_2\lambda_3\geq 16$ and the equality holds if and only if $\theta_1=\theta_2=\theta_3=\theta_4=\frac\pi4$.

\subsection{} To verify the statement about $\lambda_1\lambda_2+\lambda_2\lambda_3+\lambda_3\lambda_1$, by the formula of the characteristic polynomial $P_4(x)$, it is enough to show $$3(a_1a_2+a_2a_3+a_3a_4+a_4a_1)+4(a_1a_3+a_2a_4)\geq 20$$ and the equality holds if and only if $\theta_1=\theta_2=\theta_3=\theta_4=\frac\pi4$.

In fact, consider $g:=3(a_1a_2+a_2a_3+a_3a_4+a_4a_1)+4(a_1a_3+a_2a_4)$ as a function defined on the domain $\Omega_4.$

To find the absolute minimum of $g,$ we apply the method of Lagrange multiplier. 
Let $$G=3(a_1a_2+a_2a_3+a_3a_4+a_4a_1)+4(a_1a_3+a_2a_4)+y(\theta_1+\theta_2+\theta_3+\theta_4-\pi).$$
Then 
\begin{align*}
0=\frac{\partial G}{\partial \theta_1}&=-(3a_2+3a_4+4a_3)(1+a_1^2)+y,\\
0=\frac{\partial G}{\partial \theta_2}&=-(3a_1+3a_3+4a_4)(1+a_2^2)+y,\\
0=\frac{\partial G}{\partial \theta_3}&=-(3a_2+3a_4+4a_1)(1+a_3^2)+y,\\
0=\frac{\partial G}{\partial \theta_4}&=-(3a_3+3a_1+4a_2)(1+a_4^2)+y,\\
0=\frac{\partial G}{\partial y}&=\theta_1+\theta_2+\theta_3+\theta_4-\pi.
\end{align*}

The first and the third equation above imply that
$$(3a_2+3a_4+4a_3)(1+a_1^2)=(3a_2+3a_4+4a_1)(1+a_3^2)$$
which is equivalent to
$$(a_1-a_3)(3a_1a_2+3a_1a_4+3a_2a_3+3a_3a_4+4a_1a_3-4)=0.$$

We claim that the second factor is positive, i.e., $3a_1a_2+3a_1a_4+3a_2a_3+3a_3a_4+4a_1a_3>4.$  

In fact, since $\theta_1+\theta_2+\theta_3<\pi,$ then $\cot(\theta_1+\theta_2)>\cot(\pi-\theta_3).$ Then 
$$\frac{a_1a_2-1}{a_1+a_2}>-a_3$$ which is equivalent to 
\begin{align}\label{fm:ine}
a_1a_2+a_2a_3+a_3a_1>1 
\end{align}
since $a_1+a_2>0.$

By the similar reason, $$a_1a_4+a_4a_3+a_3a_1>1.$$

At least one of $a_2$ and $a_4$ is positive. If $a_2>0$, then 
\begin{align*} 
&3a_1a_2+3a_1a_4+3a_2a_3+3a_3a_4+4a_1a_3\\
&\hspace{50pt}=3(a_1a_4+a_4a_3+a_3a_1)+(a_1a_2+a_2a_3+a_3a_1)+2(a_1+a_3)a_2\\
&\hspace{50pt}>3+1+0.
\end{align*}

If $a_4>0$, then
\begin{align*} 
&3a_1a_2+3a_1a_4+3a_2a_3+3a_3a_4+4a_1a_3\\
&\hspace{50pt}=(a_1a_4+a_4a_3+a_3a_1)+3(a_1a_2+a_2a_3+a_3a_1)+2(a_1+a_3)a_4\\
&\hspace{50pt}>1+3+0.
\end{align*}

Thus the only possibility is $a_1=a_3$ which implies $\theta_1=\theta_3$. By the similar argument, $0=\frac{\partial G}{\partial \theta_2}$ and $0=\frac{\partial G}{\partial \theta_4}$ imply $\theta_2=\theta_4.$ Since $\theta_1+\theta_2+\theta_3+\theta_4=\pi,$ we have $\theta_1+\theta_2=\frac\pi2$ which implies $a_1a_2=1.$

Now $0=\frac{\partial G}{\partial \theta_1}$ and $0=\frac{\partial G}{\partial \theta_2}$ imply 
$$(3a_2+3a_4+4a_3)(1+a_1^2)=(3a_1+3a_3+4a_4)(1+a_2^2).$$ Since $a_1=a_3$ and $a_2=a_4$, we have 
$$(6a_2+4a_1)(1+a_1^2)=(6a_1+4a_2)(1+a_2^2).$$ Since $a_1a_2=1,$ we have
$$(a_1-a_2)(a_1^2+a_2^2+a_1a_2+1)=0.$$

Since the second factor satisfies $$a_1^2+a_2^2+a_1a_2+1=\frac12(a_1^2+a_2^2)+\frac12(a_1+a_2)^2+1>0,$$ the only possibility is $a_1=a_2.$

Therefore the function $g=3(a_1a_2+a_2a_3+a_3a_4+a_4a_1)+4(a_1a_3+a_2a_4)$ has the unique critical point $(\frac\pi4,\frac\pi4,\frac\pi4,\frac\pi4).$ 

Next, we claim that $g>0$. Since at least three of $a_1,a_2,a_3,a_4$ are positive, without loss of generality, we may assume that $a_1>0,a_2>0,a_3>0.$
Let's write $$g=2(a_1a_2+a_2a_4+a_4a_1)+2(a_2a_3+a_3a_4+a_4a_2)+(a_2+a_4)a_1+(a_1+a_4)a_3+4a_1a_3.$$
Then each term of sum above is positive.

At last, we investigate the behavior of $g$ when the variable approaches the boundary of the domain $\Omega_4$.
Let $(\theta_1(t),\theta_2(t), \theta_3(t), \theta_4(t))$, $t\in [0,\infty),$ be a path in the domain $\Omega_4$. Let $a_i(t)=\cot \theta_1(t)$ for $i=1,2,3,4.$

Without loss of generality, we have
$$\lim_{t\to \infty} (\theta_1(t),\theta_2(t), \theta_3(t), \theta_4(t))=(0, s_2, s_3, s_4)$$ where $s_i\geq 0$ for $i=2,3,4$ and $s_2+s_3+s_4=\pi.$ And we can assume that $s_2<\frac\pi2$ and $s_3<\frac\pi2$. 

Let's write
\begin{align*}
g&=2(a_1(t)a_2(t)+a_2(t)a_4(t)+a_4(t)a_1(t))\\
&+2(a_2(t)a_3(t)+a_3(t)a_4(t)+a_4(t)a_2(t))\\
&+(a_2(t)+a_4(t))a_1(t)+(a_1(t)+a_4(t))a_3(t)+4a_1(t)a_3(t).
\end{align*}
By the inequality (\ref{fm:ine}), $$a_1(t)a_2(t)+a_2(t)a_4(t)+a_4(t)a_1(t)>1$$ and $$a_2(t)a_3(t)+a_3(t)a_4(t)+a_4(t)a_2(t)>1$$ for any $t\in [0, \infty)$.
Since $a_2(t)+a_4(t)>0$ when $t$ is sufficiently large, $\lim_{t\to \infty}(a_2(t)+a_4(t))a_1(t)=\infty.$ And $(a_1(t)+a_4(t))a_3(t)>0, 4a_1(t)a_3(t)>0$ when $t$ is sufficiently large. Hence $g$ approaches $\infty.$

Therefore $g$ has a lower bound and can not achieve its absolute minimum at a boundary point. It much achieve its absolute minimum at the unique critical point $(\frac\pi4,\frac\pi4,\frac\pi4,\frac\pi4).$

\subsection{} We verify the statement about $\lambda_1$ in this subsection. First, we verify that $\lambda_1\leq 2$ as follows.
Let 
\begin{align*}
Q(x):=\frac{P_4(x)}x&=x^3-2(a_1+a_2+a_3+a_4)x^2\\
& \hspace{65pt}+(3(a_1a_2+a_2a_3+a_3a_4+a_4a_1)+4(a_1a_3+a_2a_4))x\\
& \hspace{190pt} -4(a_1+a_2+a_3+a_4).
\end{align*} 

We have $Q(0)=-4(a_1+a_2+a_3+a_4)\leq -16.$ 

If $Q(2)> 0,$ then the first root of $Q(x)$ is less that $2$, i.e., $\lambda_1< 2.$   

If $Q(2)\leq 0$, we claim that $Q'(0)>0$ and $Q'(2)\leq 0.$ Once the two statements are established, $\lambda_1\leq \lambda_2\leq 2.$

In fact $Q'(0)=3(a_1a_2+a_2a_3+a_3a_4+a_4a_1)+4(a_1a_3+a_2a_4)\geq 20.$

To verify $Q'(2)\leq 2,$ we need to use the assumption $Q(2)\leq 2$. In fact $Q(2)\leq 2$ implies 
$$3(a_1a_2+a_2a_3+a_3a_4+a_4a_1)+4(a_1a_3+a_2a_4)\leq 6(a_1+a_2+a_3+a_4)-4.$$ 
Now 
\begin{align*}
&Q'(2)\\
&=12-8(a_1+a_2+a_3+a_4)+3(a_1a_2+a_2a_3+a_3a_4+a_4a_1)+4(a_1a_3+a_2a_4)\\
&\leq 12-8(a_1+a_2+a_3+a_4)+6(a_1+a_2+a_3+a_4)-4\\
&=8-2(a_1+a_2+a_3+a_4)\\
&\leq 0,
\end{align*}
since $a_1+a_2+a_3+a_4\geq 4.$

Second, we verify that $\lambda_1=2$ if and only if $\theta_1=\theta_2=\theta_3=\theta_4=\frac\pi4$. Since $\lambda_1=2$ is the first root of $Q(x)$, we have $Q'(2)\geq 0.$ On the other hand, it is shown that $Q(2)\leq 0$ implies $Q'(2)\leq 0.$ Hence the only possibility is $Q'(2)=0.$ This requires that $a_1+a_2+a_3+a_4=4$. Therefore we must have $\theta_1=\theta_2=\theta_3=\theta_4=\frac\pi4$.

\section{general cyclic polygons}

\subsection{} Assume $n\geq 5.$ The vertices of a cyclic $n$-gon decompose its circumcircle into $n$ arcs. We assume the radius of the circumcircle is 1 and the lengths of the $n$ arcs are $2\theta_1,2\theta_2,...,2\theta_n.$ 

The discrete Laplace operator of a cyclic $n$-gon is independent of the choice of a triangulation. It is 

$$
L_n=\left(
\begin{array}{ccccccc}
a_1+a_n & -a_1    &0        &0          &...   & -a_n \\
-a_1    & a_1+a_2 &-a_2     &0          &...   & 0    \\
0       &-a_2     &a_2+a_3  & -a_3      &...   & 0  \\
0       & 0       &-a_3     &  a_3+a_4  &...   &0 \\
0       &0        &0        & -a_4      &...   &0 \\
  \      &  \       &\    &   \        & \ddots     &\  \\
-a_n      & 0        &0       &0           &...  &a_{n-1}+a_n
\end{array}
\right).
$$

The eigenvalues are $0=\lambda_0\leq \lambda_1 \leq ... \leq \lambda_{n-1}.$

\subsection{} We have $\sum_{i=1}^{n-1}\lambda_i=2\sum_{i=1}^na_i.$ By the similar argument in the case of triangles and cyclic quadrilaterals, we can show that $\sum_{i=1}^na_i$ has the unique critical point $(\theta_1,...,\theta_n)=(\frac\pi n,...,\frac\pi n)$.

Since there is at most one non-positive number in $a_1,...,a_n$, without loss of generality, we may assume $a_1>0,...,a_{n-1}>0.$ Since $a_{n-1}+a_{n}>0,$ we have $\sum_{i=1}^na_i>0.$  

We investigate the behavior of $\sum_{i=1}^na_i$ when the variable approaches the boundary of the domain 
$$\Omega_n=\{(\theta_1,...,\theta_n)\ | \  \theta_1+...+\theta_n=\pi, \theta_i>0, i=1,...,n \}.$$
Let $(\theta_1(t),\theta_2(t), \theta_3(t),\theta(t))$, $t\in [0,\infty),$ be a path in the domain $\Omega_4$. Let $a_i(t)=\cot \theta_1(t)$ for $i=1,2,3,4.$

Without loss of generality, we have
$$\lim_{t\to \infty} (\theta_1(t),...,\theta_n(t))=(0, s_2,..., s_n)$$ where $s_i\geq 0$ for $i=2,...,n$ and $s_2+...+s_n=\pi.$ And we can assume that 
$s_2<\frac\pi2,...,s_{n-1}<\frac\pi2.$ Since $a_i(t)>0$ for $i=2,...,n-1$ and $a_{n-1}+a_n>0$ when $t$ is sufficiently large, $\lim_{t\to \infty}a_1=\infty$ implies that  $\lim_{t\to \infty} \sum_{i=1}^na_i=\infty.$

Thus $\sum_{i=1}^na_i$ achieved the absolute minimum at $(\frac\pi n,...,\frac\pi n)$.

\subsection{} In this subsection we verify the statement about $\prod_{i=1}^{n-1}\lambda_i.$

\begin{2}
Let $M$ be an $n$ by $n$ matrix. If the sum of the entries of each row or each column of $M$ vanishes, all principle $n-1$ by $n-1$ submatrices of $M$ have the same determinant, and this value is equal to $\frac 1n$ times the product of all nonzero eigenvalues of $M$.
\end{2}

For the reference of the weighted matrix-tree Theorem, for example, see \cite{LW}, page 450, Problem 34A or \cite{DKM}, Theorem 1.2.

In our case, according the weighted matrix-tree Theorem, to calculate $\prod_{i=1}^{n-1}\lambda_i$ of the matrix $L_n$, it is enough to calculate a particular principle $n-1$ by $n-1$ matrix. 
\begin{lemma}
Let $N_n$ be the submatrix obtained by deleting the first row and first column of the matrix $L_n$. Then
$$\det N_n=\sum_{i=1}^n a_1...\widehat{a_{i}}...a_n,$$
where $\widehat{a_{i}}$ means that $a_i$ is missing.
\end{lemma}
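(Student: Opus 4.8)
The plan is to compute $\det N_n$ directly by exploiting the near-tridiagonal structure of $N_n$. Deleting the first row and column of $L_n$ destroys the two ``corner'' entries $-a_n$, so $N_n$ is in fact a genuine tridiagonal matrix: its diagonal is $(a_1+a_2,\, a_2+a_3,\, \dots,\, a_{n-1}+a_n)$ and its off-diagonal entries are $-a_2, -a_3, \dots, -a_{n-1}$. Let $D_k$ denote the determinant of the top-left $k\times k$ block of $N_n$, so $D_{n-1}=\det N_n$. Expanding along the last row of a tridiagonal matrix gives the standard three-term recursion
\begin{align*}
D_k=(a_k+a_{k+1})D_{k-1}-a_k^2\,D_{k-2},
\end{align*}
with $D_0=1$ and $D_1=a_1+a_2$.

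First I would guess the closed form and then verify it by induction against this recursion. A natural candidate, suggested by the symmetric appearance of the claimed answer $\sum_{i=1}^n a_1\cdots\widehat{a_i}\cdots a_n$, is that $D_k$ equals the analogous elementary-symmetric-type expression in $a_1,\dots,a_{k+1}$, namely
\begin{align*}
D_k=\sum_{i=1}^{k+1} a_1\cdots\widehat{a_i}\cdots a_{k+1}
   = a_1\cdots a_{k+1}\sum_{i=1}^{k+1}\frac{1}{a_i}.
\end{align*}
This matches the base cases ($D_0=1$, $D_1=a_1+a_2$), and for $k=n-1$ it is exactly the asserted formula. It remains to substitute this ansatz into the recursion: one checks that
\begin{align*}
(a_k+a_{k+1})D_{k-1}-a_k^2 D_{k-2}
 = a_1\cdots a_{k-1}\Bigl[(a_k+a_{k+1})\!\sum_{i=1}^{k}\frac{a_1\cdots a_k}{a_i}-a_k^2\!\sum_{i=1}^{k-1}\frac{a_1\cdots a_{k-1}}{a_i}\Bigr],
\end{align*}
and a short manipulation (separating the $i=k$ term in the first sum from the $i<k$ terms, which combine with the $-a_k^2 D_{k-2}$ piece) collapses this to $D_k$ in the claimed form. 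Then $\det N_n=D_{n-1}=\sum_{i=1}^n a_1\cdots\widehat{a_i}\cdots a_n$.

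I expect the only real point of care to be setting up the tridiagonal recursion correctly — in particular observing that the two corner terms $-a_n$ in $L_n$ both disappear upon deleting the first row and column, so that no Schur-complement or bordered-determinant argument is needed and an ordinary tridiagonal expansion applies. Once the recursion is in hand the induction is routine bookkeeping with the products $a_1\cdots a_{k+1}$. Alternatively, one could avoid guessing by factoring out $a_2,\dots,a_{n-1}$ and recognizing the remaining determinant, but the inductive verification of the explicit ansatz is the cleanest route and is the one I would write up.
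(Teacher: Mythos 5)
Your proposal is correct and takes essentially the same route as the paper: the paper also observes that $N_n$ is tridiagonal, derives the three-term recursion $\det N_m=(a_{m-1}+a_m)\det N_{m-1}-a_{m-1}^2\det N_{m-2}$ (your $D_k$ is exactly the leading principal minor $\det N_{k+1}$), and verifies the closed form by induction via the same step of splitting off the $i=k$ term so the $-a_k^2 D_{k-2}$ piece cancels. Two cosmetic remarks only: in your displayed identity the prefactor $a_1\cdots a_{k-1}$ should not be there (the bracketed expression already equals the left-hand side), and it is safer to carry out the bookkeeping in the hatted-product form $\sum_i a_1\cdots\widehat{a_i}\cdots a_{k+1}$ rather than writing $a_1\cdots a_{k+1}\sum_i 1/a_i$, since some $a_i$ may vanish (when $\theta_i=\pi/2$).
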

\begin{proof} We prove the statement by the mathematical induction. It holds for $n=4$ as we see in the formula of the characteristic polynomial $P_4(x)$. We assume it holds for $n\leq m-1.$ By the property of tridiagonal matrices, we have
$$\det N_m=(a_{m-1}+a_m)\det N_{m-1}-a_{m-1}^2\det N_{m-2}.$$ Then by the assumption of the induction,
\begin{align*}
\det N_m&=(a_{m-1}+a_m)\sum_{i=1}^{m-1} a_1...\widehat{a_{i}}...a_{m-1}-a_{m-1}^2\sum_{i=1}^{m-2} a_1...\widehat{a_{i}}...a_{m-2}\\
&=a_{m-1}\sum_{i=1}^{m-1} a_1...\widehat{a_{i}}...a_{m-1}-a_{m-1}\sum_{i=1}^{m-2} a_1...\widehat{a_{i}}...a_{m-2}a_{m-1}\\
&\hspace{200pt}+a_m\sum_{i=1}^{m-1} a_1...\widehat{a_{i}}...a_{m-1}\\
&=a_1...a_{m-1}+a_m\sum_{i=1}^{m-1} a_1...\widehat{a_{i}}...a_{m-1}\\
&=\sum_{i=1}^m a_1...\widehat{a_{i}}...a_m.\\
\end{align*}

\end{proof}

In the following, we prove that $\sum_{i=1}^n a_1...\widehat{a_{i}}...a_n$ achieves its absolute minimum when $\theta_1=...=\theta_n=\frac\pi n$. 
It is enough to show that 
\begin{itemize}
\item[a.] $\sum_{i=1}^n a_1...\widehat{a_{i}}...a_n$ has the unique critical point $(\frac\pi n,...,\frac\pi n)$;
\item[b.] $\sum_{i=1}^n a_1...\widehat{a_{i}}...a_n>0$;
\item[c.] $\sum_{i=1}^n a_1...\widehat{a_{i}}...a_n$ approaches $\infty$ as the variable approaches the boundary of the domain $\Omega_n$.
\end{itemize}

When $n=4,$ since $a_1a_2a_3+a_1a_2a_4+a_1a_3a_4+a_2a_3a_4=a_1+a_2+a_3+a_4,$ the three statements above are already shown to be true in section 3.
We assume that the three statements above hold when $n\leq m-1.$ 

Let's check the three statements when $n=m.$ Consider the function
$$H=\sum_{i=1}^m a_1...\widehat{a_{i}}...a_m-y(\theta_1+...\theta_m-\pi).$$
Then $0=\frac{\partial H}{\partial \theta_1}$ and $0=\frac{\partial H}{\partial \theta_2}$ imply that 
$$(a_3...a_m+a_2\sum_{i=3}^m a_3...\widehat{a_{i}}...a_m)(1+a_1^2)=(a_3...a_m+a_1\sum_{i=3}^m a_3...\widehat{a_{i}}...a_m)(1+a_2^2).$$
Since $a_1+a_2>0,$ it is equivalent to
$$(a_1-a_2)(a_1+a_2)(a_3...a_m+\frac{a_1a_2-1}{a_1+a_2}\sum_{i=3}^m a_3...\widehat{a_{i}}...a_m)=0.$$ 

The third factor is $$\cot\theta_3...\cot\theta_m+\cot(\theta_1+\theta_2)\sum_{i=3}^m\cot\theta_3...\widehat{\cot\theta_i}...\cot\theta_m$$ which is written as $\sum_{i=1}^m \widetilde{a}_1...\widehat{\widetilde{a}_i}...\widetilde{a}_{m-1}$, where $\widetilde{a_1}=\cot(\theta_1+\theta_2), \widetilde{a}_i=\cot\theta_{i+1}$ for $i=2,...,m-1.$ This expression corresponds to a cyclic $(m-1)$-gon. By assumption of the induction, $\sum_{i=1}^m \widetilde{a}_1...\widehat{\widetilde{a}_i}...\widetilde{a}_{m-1}>0.$

Hence the only possibility is $a_1=a_2.$ By similar argument, we show that $a_i=a_j$ for any $i,j.$ Hence the function $\sum_{i=1}^m a_1...\widehat{a_i}...a_m$ has the unique critical point such that $\theta_i=\frac\pi m$ for any $i=1,...,m.$

Next, we claim that $\sum_{i=1}^m a_1...\widehat{a_i}...a_m>0.$  
Without loss of generality, we assume that $a_1>0, a_2>0, ..., a_{m-1}>0.$ Now
\begin{align*}
&\sum_{i=1}^m a_1...\widehat{a_i}...a_m\\
&=a_1a_2...a_{m-2}(a_{m-1}+a_m)+\sum_{i=1}^{m-2} a_1...\widehat{a_i}...a_{m-2}(a_{m-1}a_m)\\
&=a_1a_2...a_{m-2}(a_{m-1}+a_m)+\sum_{i=1}^{m-2} a_1...\widehat{a_i}...a_{m-2}(a_{m-1}a_m-1)+\sum_{i=1}^{m-2} a_1...\widehat{a_i}...a_{m-2}\\
&=(a_{m-1}+a_m)(a_1a_2...a_{m-2}+\sum_{i=1}^{m-2} a_1...\widehat{a_i}...a_{m-2}\frac{a_{m-1}a_m-1}{a_{m-1}+a_m})+\sum_{i=1}^{m-2} a_1...\widehat{a_i}...a_{m-2}.
\end{align*}

Let $\widetilde{a}_{m-1}=\frac{a_{m-1}a_m-1}{a_{m-1}+a_m}=\cot(\theta_{m-1}+\theta_m).$ Then 
\begin{align*}
&a_1a_2...a_{m-2}+\sum_{i=1}^{m-2} a_1...\widehat{a_i}...a_{m-2}\frac{a_{m-1}a_m-1}{a_{m-1}+a_m}\\
&=a_1a_2...a_{m-2}+\sum_{i=1}^{m-2} a_1...\widehat{a_i}...a_{m-2}\widetilde{a}_{m-1}.\\
\end{align*}
Consider an cyclic $(m-1)$-gon with angles $\theta_1,...,\theta_{m-2}, \theta_{m-1}+\theta_m.$ By the assumption of induction, $$a_1a_2...a_{m-2}+\sum_{i=1}^{m-2} a_1...\widehat{a_i}...a_{m-2}\widetilde{a}_{m-1}>0.$$ Therefore $\sum_{i=1}^m a_1...\widehat{a_i}...a_m>0.$

At last, we investigate the behavior of the function $\sum_{i=1}^m a_1...\widehat{a_i}...a_m$ when the variable approaches the boundary of the domain $$\Omega_m=\{(\theta_1,...,\theta_m)\ | \  \theta_1+...+\theta_m=\pi, \theta_i>0, i=1,...,m \}.$$ Let $(\theta_1(t),..., \theta_m(t))$, $t\in [0,\infty),$ be a path in the domain $\Omega_m$. Let $a_i(t)=\cot \theta_1(t)$ for $i=1,...,m.$

Without loss of generality, we assume $$\lim_{t\to \infty}(\theta_1(t),...,\theta_m(t))=(0,s_2,...,s_m),$$ where $s_2\geq 0,...,s_m \geq 0$ and $s_2+...+s_m=\pi.$ And we can assume furthermore that $s_2<\frac\pi2,...,s_{m-1}<\frac\pi2.$ Thus $a_1(t)>0,...,a_{m-1}(t)>0$ when $t$ is sufficiently large. To simplify the notation, we denote $a_i(t)$ by $a_i$ in the follows.
Now 
\begin{align*}
&\sum_{i=1}^m a_1...\widehat{a_i}...a_m\\
&=(a_{m-1}+a_m)(a_1a_2...a_{m-2}+\sum_{i=1}^{m-2} a_1...\widehat{a_i}...a_{m-2}\frac{a_{m-1}a_m-1}{a_{m-1}+a_m})+\sum_{i=1}^{m-2} a_1...\widehat{a_i}...a_{m-2}\\
&=(a_{m-1}+a_m)(a_1a_2...a_{m-2}+\sum_{i=1}^{m-2} a_1...\widehat{a_i}...a_{m-2}\widetilde{a}_{m-1})+\sum_{i=1}^{m-2} a_1...\widehat{a_i}...a_{m-2},
\end{align*}
where $\widetilde{a}_{m-1}=\frac{a_{m-1}a_m-1}{a_{m-1}+a_m}=\cot(\theta_{m-1}+\theta_m).$

By the assumption of induction, $$a_1a_2...a_{m-2}+\sum_{i=1}^{m-2} a_1...\widehat{a_i}...a_{m-2}\widetilde{a}_{m-1}>0$$ for any $t\in [0,\infty)$. Since $a_{m-1}+a_m>0$ for any $t\in [0,\infty)$ and $\sum_{i=1}^{m-2} a_1...\widehat{a_i}...a_{m-2}$ approaches $\infty,$ we see that $\sum_{i=1}^m a_1...\widehat{a_i}...a_m$ approaches $\infty.$

\end{document}